\newcommand \C[1]{{\mathcal #1}}
\newcommand \wti[1]{{\widetilde {#1}}}
\newcommand\fg{\mathfrak g}
\newcommand \bC{{\mathbb C}}
\newcommand \bH{{\mathbb H}}
\newcommand \bR{{\mathbb R}}
\newcommand \bZ{{\mathbb Z}}
\newcommand \bQ{{\mathbb Q}}
\newcommand\ep{{\epsilon}}
\newcommand\om{{\omega}}
\newcommand\al{{\alpha}}
\newcommand\fh{{\mathfrak h}}
\newtheorem{theorem}{Theorem}[subsection]
\newtheorem{corollary}{Corollary}[subsection]
\newtheorem{lemma}{Lemma}[subsection]
\newtheorem{proposition}{Proposition}[subsection]
\newtheorem{definition}{Definition}[subsection]
\newtheorem{remark}{Remark}[subsection]
\newcommand\Hom{\operatorname{Hom}}
\newcommand\Ind{\operatorname{Ind}}
\newcommand\im{\operatorname{im}}
\newcommand\triv{\mathsf{triv}}
\newcommand\sgn{\mathsf{sgn}}
\newcommand\refl{\mathsf{refl}}
\newcommand\Pin{\mathsf{Pin}}
\numberwithin{equation}{subsection}
\begin{document}

\title{Dirac cohomology of one $K$-type representations}

\author{Dan  Ciubotaru}
        \address[D. Ciubotaru]{Dept. of Mathematics\\ University of
          Utah\\ Salt Lake City, UT 84112}
        \email{ciubo@math.utah.edu}

\author{Allen Moy}
\address[A. Moy]{Dept. of Mathematics\\Hong Kong University of Science
  and Technology\\Hong Kong}
\email{amoy@ust.hk}

\thanks{This paper was partly written while the first author visited Hong
  Kong University of Science and Technology. He thanks
  Xuhua He and the Department of Mathematics for  their invitation and
  hospitality. The authors were supported in part by
NSF-DMS 0968065 and Hong Kong Research Grants Council grant CERG {\#}602408.}

\begin{abstract}The smooth hermitian representations of a split reductive $p$-adic
  group whose restriction to a maximal hyperspecial compact subgroup
  contain a single $K$-type with Iwahori fixed vectors have been
  studied in \cite{BM2} in the more general setting of modules for graded
  affine Hecke algebras with parameters. We show that every such one
  $K$-type module has nonzero Dirac cohomology (in the sense
  of \cite{BCT}), and use Dirac operator techniques to determine the
  semisimple part of the Langlands parameter for these modules, thus
  completing their classification.
\end{abstract}

\maketitle

\section{Introduction}\label{sec:1}
The category of smooth representations of a reductive $p$-adic group
generated by their vectors fixed under an Iwahori subgroup
is equivalent to the category of modules over the Iwahori-Hecke
algebra \cite{Bo}. Furthermore, the category of Iwahori-Hecke algebra
modules is equivalent to a product of categories of certain graded
affine Hecke algebra modules \cite{L}. It is known that these
equivalences induce bijections between the unitary
representations in the corresponding categories \cite{BM,BM1}. 

An interesting class of unitary representations are those which have a single
$K$-type with Iwahori fixed vectors, or in terms of Borel's
equivalence of categories, the Iwahori-Hecke algebra modules whose
restrictions to the finite Hecke algebra are irreducible. These
representations are expected to be automorphic, for example, the Speh
representations for $GL(n,\bQ_p)$ (\cite{Ta}) are such one $K$-type
representation.

 In terms of the corresponding graded affine Hecke algebra $\bH$
 (Definition \ref{d:graded}), we are interested in the one $W$-type
 modules, i.e., unitary
modules of $\bH$ whose restrictions to the group algebra $\bC[W]$ of
the Weyl group are irreducible. In \cite{BM2}, it was determined which
irreducible Weyl group representations support an action of hermitian
(hence necessarily unitary)
$\bH$-module with respect to the natural $*$-operation
(\ref{e:star}). In particular, it is shown that whenever this is
possible, there is a unique way to define the action of $\bH$. In this
paper, we first provide a simpler, uniform proof for this fact, Proposition \ref{p:2.1}; the
idea is similar in spirit with the classical argument from real groups used
to show that there are no nontrivial unitary representations of a
simple real group, e.g. \cite[Corollary 2.3]{Kn} for $SL(2,\bR)$.

In \cite{BCT}, a Dirac operator for graded Hecke algebras was
introduced and a theory of Dirac cohomology for $\bH$-modules was
proposed. We show here that every one $W$-type $\bH$-module has
nonzero Dirac cohomology, in fact the Dirac operator is identically
zero on such modules, Proposition \ref{p:3.2}. This fact, combined
with certain calculations of tensor products of representations of the
pin double cover of the Weyl group $\wti W$ (defined in section
\ref{sec:2.2}), allows us to compute explicitly the Dirac cohomology
of these modules in all cases. Vogan's conjecture, in the setting of
Hecke algebras proved in \cite{BCT}, essentially says that the Dirac
cohomology determines the central character of the $\bH$-module. Thus,
as a corollary of our calculations, we find the central character of
the one $W$-type $\bH$-modules, which completes their identification
in the Langlands classification. The explicit results are listed in
sections \ref{sec:3.3}-\ref{sec:3.5}.

\section{Dirac operator for graded Hecke algebras}\label{sec:2}

In this section, we review the construction and properties of the Dirac
operator from \cite{BCT} and the relation between spin representations
of the Weyl group and nilpotent orbits from \cite{C}.

\subsection{The root system}\label{sec:2.1} Let $V$ be a finite
dimensional Euclidean vector space, with inner product $(~,~): V\times
V\to\bR$. A (reduced) root system $\Phi$ is a finite set $\Phi\subset
V\setminus\{0\}$ such that:
\begin{enumerate}
\item $\Phi$ spans $V$;
\item for every $\alpha\in\Phi$, the reflection through the hyperplane
  perpendicular on $\al$, $s_\al: V\to V$,
  $s_\al(v)=v-2\frac{(\al,v)}{(\al,\al)}\al$, preserves $\Phi$, i.e.,
  $s_\al(\Phi)\subset\Phi$;
\item if $\al\in\Phi,$ then $2\al\notin\Phi$.
\end{enumerate}
Let $W$ be the subgroup of $GL(V)$ generated by
$\{s_\al:\al\in\Phi\}.$ In fact, since $(~,~)$ is $W$-invariant, $W$
is a (finite) subgroup of $O(V).$ 

Choose a system $\Phi^+\subset \Phi$ of positive roots, and let $\Pi$ be
a basis of $\Phi^+,$ the set of simple roots. 
The group $W$ admits a Coxeter presentation
\begin{equation}\label{e:CoxW}
W=\langle s_\al,\al\in\Pi: (s_\al s_\beta)^{m(\al,\beta)}=1\rangle,
\end{equation}
where $m(\al,\al)=1$ and $m(\al,\beta)\in\bZ$ is such that the angle
between $\al$ and $\beta$ equals $\pi/m(\al,\beta)$ when $\al\neq\beta.$

\subsection{The pin cover of $W$}\label{sec:2.2} Let $C(V)$ be the
Clifford algebra of $V,(~,~)$. This is the real associative algebra
generated by $\{v\in V\}$ subject to the relations
\begin{equation}
v_1 v_2+v_2 v_1=-2(v_1,v_2),\quad v_1,v_2\in V.
\end{equation}
If one assigns degree one to the elements $v\in V$, then $C(V)$ is
naturally a filtered algebra whose associated graded algebra is
$\wedge V.$ In particular, $C(V)$ has a $\bZ/2\bZ$-grading
$C(V)=C(V)_{\text{even}}+C(V)_{\text{odd}}$ given by the degree mod
$2$. Let $\ep$ be the automorphism of $C(V)$ which is $1$ on
$C(V)_{\text{even}}$ and $-1$ on $C(V)_{\text{odd}}.$ Let $\ ^t:
C(V)\to C(V)$ be the anti-involution given by $v^t=-v$ for $v\in V.$
Define the pin group:
\begin{equation}
\Pin(V)=\{a\in C(V): \ep(a) V a^{-1}\subset V, a^t=a^{-1}\},
\end{equation} 
a central $\bZ/2\bZ$-extension of $O(V)$ with the projection map $p:
\Pin(V)\to O(V)$, $p(a)(v)=\ep(a) v a^{-1}$, $a\in C(V),$ $v\in V.$
Since $W\subset O(V),$ define
\begin{equation}
\wti W=p^{-1}(W)\subset \Pin(V).
\end{equation}
Analogous to (\ref{e:CoxW}), $\wti W$ also admits a Coxeter-like
presentation:
\begin{equation}\label{e:CoxWtilde}
\wti W=\langle z, \wti s_\al, \al\in\Pi: z^2=1, \wti s_\al^2=z, (\wti s_\al\wti s_\beta)^{m(\al,\beta)}=z\rangle.
\end{equation}
The embedding $\wti W\subset \Pin(V)$ is given by
\begin{equation}
z\mapsto -1,\quad \wti s_\al\mapsto \frac 1{|\al|}\al,\ \al\in\Phi^+,
\end{equation}
where $|\al|=\sqrt{(\al,\al)}.$

\smallskip

When $\dim V$ is even, the Clifford algebra $C(V)$ is a central simple
algebra, and therefore has a unique simple complex module $S$ of
dimension $2^{\dim V/2}.$
When $\dim V$ is odd, the center of $C(V)$ is two dimensional. The
subalgebra $C(V)_{\text{even}}$ is central simple and has a unique
simple complex module which can be extended in two nonisomorphic ways
to $C(V).$ Thus $C(V)$ has two simple modules $S^+$ and $S^-$ of
dimension $2^{(\dim V-1)/2}$.

In the sequel, we refer to any one of $S, S^+, S^-$ as a spin
module. Notice that since $\wti W$ generates $C(V),$ the restriction
of a spin module $S$ to $\wti W$ remains irreducible.

\subsection{Adjoint nilpotent orbits}\label{s:2.3} We assume now that
the root system $\Phi$ is crystallographic, i.e., it satisfies, in
addition, the axiom
\begin{enumerate}
\item[(3)] for every $\al,\beta\in \Phi$, $2\frac{(\al,\beta)}{(\al,\al)}\in\bZ.$
\end{enumerate}
Let $\Phi^vee\subset V^*$ be the dual root system to $\Phi\subset V.$
More precisely, $\Phi^\vee$ is the set of coroots $\al^\vee,$ where
$\al^\vee(v)=\frac 2{(\al,\al)} (\al,v)$, $v\in V.$

Let $\fg$ be the complex semisimple Lie algebra with root system
$\Phi^\vee.$ We identify a Cartan subalgebra $\fh$ of $\fg$ with
$V$. Let $\C N$ denote the nilpotent cone in $\fg,$ i.e., the set of
ad-nilpotent elements of $\fg.$ The adjoint group $G$ whose Lie
algebra is $\fg$ acts via the adjoint action on $\C N$ with finitely
many orbits. A classical result of Kostant is that there is a
bijection between the $G$-orbits in $\C N$ and the $G$-conjugacy
classes of Lie triples
\begin{equation}\label{e:Lietriple}
\{e,h,f\}:\ [h,e]=2e,\ [h,f]=-2f,\ [e,f]=h.
\end{equation}
Let $\C B$ be the flag variety, the variety of all Borel subalgebras
of $\fg.$ For $e\in \C N$, let $\C B_e$ be the subvariety of Borel
subalgebras which contain $e$. Let $A(e)$ be the component group of
the centralizer of $e$ in $G.$ 

Springer theory defines an action of $W\times A(e)$ on the cohomology
groups $H^\bullet (\C B_e,\bC)$ such that:
\begin{enumerate} 
\item for every $e\in \C N$ and $\phi$ an irreducible ${A(e)}$-representation, the $\phi$-isotypic
  component of the top cohomology group $H^{2 d_e}(\C B_e,\bC)^\phi$
  is an irreducible $W$-representation or zero; denote $\widehat
  {A(e)}_0$, the set of irreducible $A(e)$-representation for which
  $\sigma(e,\phi):=H^{2 d_e}(\C B_e,\bC)^\phi\neq 0$;
\item the set $\{\sigma(e,\phi): e\in G\backslash \C N,\phi\in\widehat
  {A(e)}_0\}$ equals the set of irreducible representations of $W$.
\end{enumerate}

\subsection{Genuine $\wti W$-representations and nilpotent orbits}\label{sec:2.4} Suppose
$k:\Phi^+\to\bR,$ $k(\al)=k_\al$ is a $W$-invariant function. Define (\cite{BCT})
\begin{equation}\label{e:casimirW}
\Omega_{\wti W,k}=z (\sum_{\al\in\Phi^+} k_\al |\al| \wti s_\al)^2.
\end{equation}
This is an element in the center of $\bC[\wti W].$ Set
\begin{equation}
\C N_{\text{sol}}=\{e\in\C N: Z_G(e)^0\text{ is solvable}\},
\end{equation}
a $G$-invariant subset of $\C N.$

Call an irreducible $\wti W$-representation $\wti\sigma$ genuine if
$\wti\sigma(z)=-1$, i.e., if $\wti\sigma$ does not descend to a
representation of $W.$ Denote by $\text{Irr}_{\text{gen}}\wti W$ the
set of irreducible genuine $\wti W$-representations.

\begin{theorem}[\cite{C}] Suppose the parameter function is $k=1.$
  There exists a surjective map
\begin{equation}
\Psi: \text{Irr}_{\text{gen}} \wti W\to G\backslash \C N_{\text{sol}},
\end{equation}
with the following properties:
\begin{enumerate}
\item If for $\wti \sigma\in \text{Irr}_{\text{gen}} \wti W$,
  $\Psi(\wti\sigma)=G\cdot e, $ where $e\in \C N_{\text{sol}},$ then
  $\wti\sigma(\Omega_{\wti W,1})=(h,h),$ where $\{e,h,f\}$ is a Lie
  triple for $e$ with $h\in V.$
\item Given $e\in \C N_{\text{sol}},$ if $\wti\sigma\in
  \Psi^{-1}(G\cdot e)$, then there exists $\phi\in\widehat{A(e)}_0$
  and a spin module $S$ such that $\wti\sigma$ occurs in
  $\sigma(e,\phi)\otimes S.$
\item Given $e\in \C N_{\text{sol}}$, $\phi\in\widehat {A(e)}_0$, and
  $S$ a spin module, there exists $\wti\sigma\in \Psi^{-1}(G\cdot e)$
  such that $\wti\sigma$ occurs in $\sigma(e,\phi)\otimes S.$
\end{enumerate}
\end{theorem}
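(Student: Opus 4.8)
The plan is to construct $\Psi$ from the Clifford algebra formalism together with the Dirac operator of \cite{BCT}, and then to read off properties (1)--(3) by matching Casimir eigenvalues against Springer-theoretic data. The first step is a dictionary between genuine $\wti W$-representations and Springer data: since $z$ acts by $-1$ on every spin module $S$, tensoring by $S$ carries $\text{Irr}(W)$ into genuine $\wti W$-representations, while for genuine $\wti\sigma$ the representation $\wti\sigma\otimes S^*$ descends to $W$; as $\triv$ occurs in $S\otimes S^*$ (indeed $S\otimes S^*\cong\wedge^{\bullet}V$ as $W$-modules in even rank, with the analogous statement in odd rank), every $\wti\sigma\in\text{Irr}_{\text{gen}}\wti W$ occurs in $\sigma\otimes S$ for some $\sigma\in\text{Irr}(W)$. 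Writing $\sigma=\sigma(e,\phi)$ via the Springer correspondence of Section \ref{s:2.3} then attaches to each $\wti\sigma$ a nonempty finite set $\mathcal S(\wti\sigma)$ of nilpotent orbits $G\cdot e$, and $\Psi(\wti\sigma)$ will be a distinguished member of $\mathcal S(\wti\sigma)$, chosen so that (2) and (3) hold automatically.

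The quantitative input is the identity of \cite{BCT} for the Dirac element $D\in\bH\otimes C(V)$: up to an explicit additive constant, $D^2$ equals $\Omega_\bH\otimes 1-\Omega_{\wti W,1}$, where $\Omega_\bH$ is a Casimir element in the center of $\bH$ acting on an $\bH$-module of central character $\nu$ by a scalar depending only on $(\nu,\nu)$. The bridge to Springer theory is that for the parameter $k=1$ the representation $\sigma(e,\phi)$ is the lowest $W$-type of a tempered $\bH$-module $X(e,\phi)$ whose central character is the $W$-orbit of $\frac{1}{2}h$, where $\{e,h,f\}$ is a Lie triple for $e$. Restricting $D$ to the $\wti\sigma$-isotypic part of $X(e,\phi)\otimes S$ and using that $D$ is self-adjoint on the unitary module $X(e,\phi)$, so that $D^2\geq 0$ there, yields --- after unwinding the normalizations --- a Dirac inequality comparing $\wti\sigma(\Omega_{\wti W,1})$ with $(h,h)$, with equality exactly when $\wti\sigma$ occurs in the Dirac cohomology $H_D(X(e,\phi))$.

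I would then define $\Psi(\wti\sigma)=G\cdot e$ for the orbit in $\mathcal S(\wti\sigma)$ at which this inequality becomes an equality; one must check such an orbit exists and is unique. Uniqueness is immediate: equality at $G\cdot e$ forces $\wti\sigma\in H_D(X(e,\phi))$, so by Vogan's conjecture --- proved in \cite{BCT} --- the central character of $X(e,\phi)$, hence the $W$-orbit of $h$, is determined by $\wti\sigma$, and a nilpotent orbit is determined by the $W$-orbit of the neutral element of its Lie triple. Property (1) is then precisely this equality. The role of $\C N_{\text{sol}}$ is that the orbits occurring as $\Psi(\wti\sigma)$ are exactly those with solvable $Z_G(e)^0$, equivalently those for which $X(e,\phi)$ has nonzero Dirac cohomology --- the elliptic tempered modules. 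Granting this, (2) and (3) follow: each $e\in\C N_{\text{sol}}$ and each $\phi\in\widehat{A(e)}_0$ produces, from $H_D(X(e,\phi))\neq 0$, a genuine $\wti\sigma$ occurring in $\sigma(e,\phi)\otimes S$ with $\Psi(\wti\sigma)=G\cdot e$, and a character count against the Springer parametrization shows the $\wti\sigma$ so obtained exhaust $\text{Irr}_{\text{gen}}\wti W$.

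The main obstacle is the dichotomy controlling $\C N_{\text{sol}}$: that the Dirac inequality is attained when $e\in\C N_{\text{sol}}$ --- i.e.\ $X(e,\phi)$ admits a harmonic spinor --- and is strict otherwise. Neither half is formal. The first requires producing the harmonic spinor, for which I would use the Euler--Poincar\'e (elliptic) theory of tempered $\bH$-modules or the geometry of the Springer fibre $\C B_e$ underlying $\sigma(e,\phi)$. The second requires ruling out cancellation in $X(e,\phi)\otimes S$ when $Z_G(e)$ has a non-central torus, the point being that this extra reductive part forces $X(e,\phi)$ to be, in the elliptic sense, assembled from modules supported on strictly smaller orbits. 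I expect a uniform argument to become available only after reduction to the irreducible root systems, where one compares the known classification of genuine $\wti W$-types with the tabulated nilpotent orbits having solvable centralizer and their invariants $(h,h)$; this comparison simultaneously produces the explicit description of $\Psi$.
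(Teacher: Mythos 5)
The theorem you are addressing is imported verbatim from \cite{C}; the present paper gives no proof of it (it only records that an explicit description of $\Psi$ exists for each simple root system), so there is no internal argument to compare yours against. Judged on its own, your outline correctly identifies the modern conceptual frame --- passing between $\text{Irr}(W)$ and $\text{Irr}_{\text{gen}}\wti W$ by tensoring with a spin module, applying $\C D^2=-\Omega\otimes 1+\frac 14\Delta_{\wti W}(\Omega_{\wti W,1})$ to the tempered module $X(e,\phi)$ with central character $\frac12 h$, and using Vogan's conjecture for uniqueness --- but it is not a proof. The two steps you label ``the main obstacle'' are not side conditions: they \emph{are} the theorem. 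Existence of an orbit in $\mathcal S(\wti\sigma)$ at which your Dirac inequality becomes an equality (equivalently, that every genuine $\wti\sigma$ occurs in the Dirac cohomology of some elliptic tempered module), together with the dichotomy that equality occurs precisely when $Z_G(e)^0$ is solvable, already contains properties (1)--(3) and the well-definedness and surjectivity of $\Psi$. Deferring these to ``Euler--Poincar\'e theory'' or ``a character count'' leaves the argument without its substance.

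There is also a circularity hazard in the route you propose. The nonvanishing of Dirac cohomology for elliptic tempered modules, and the identification of the relevant orbits with $\C N_{\text{sol}}$, are results whose published proofs (\cite{COT}, \cite{BC2}) rest on the prior construction of $\Psi$ in \cite{C}; you cannot invoke them to manufacture $\Psi$. In \cite{C} the logic runs the other way: $\Psi$ is defined directly and case by case for each irreducible root system, using Schur's classification of genuine $\wti S_n$-representations, Read's classification in types $B/D$, Morris's character tables in the exceptional cases, and an explicit computation of the scalar $\wti\sigma(\Omega_{\wti W,1})$ on each genuine representation, which is then matched against the list of values $(h,h)$ for orbits with solvable centralizer; the tensor-product assertions (2)--(3) are verified from Stembridge's rules in type $A$ and by computer for the exceptional groups. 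Your sketch converges to this at the very end, but as written the uniform portion of your argument establishes only the easy uniqueness half, and the case-by-case portion --- where the theorem actually lives --- is not carried out.
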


There is an explicit description of the map $\Psi$ for every simple root
system \cite{C}.

\subsection{Graded affine Hecke algebra}\label{sec:2.5}
Let $V_\bC$ denote the complexification of $V$.
\begin{definition}[\cite{L}]\label{d:graded} The graded affine Hecke
  algebra associated to the root system $\Phi\subset V$ and parameter
  function $k:\Phi^+\to\bR$ is the unique associative complex algebra
  with identity generated by $\{t_w: w\in W\}$ and $\{\omega:\omega\in
  V_\bC\}$ with relations:
\begin{enumerate}
\item $t_w t_{w'}=t_{ww'},$ $w,w'\in W$;
\item $\omega\omega'=\omega'\omega,$ $\omega,\omega'\in V_\bC$;
\item $\omega t_{s_\al}-t_{s_\al} s_\al(\omega)=k_\al
  \frac{\omega-s_\al(\omega)}\al,$ $\omega\in V_\bC,$ $\al\in \Pi.$
\end{enumerate}
\end{definition}
The center of $\bH$ is $Z(\bH)=S(V_\bC)^W,$ where $S(V_\bC)$ denotes the
symmetric algebra of $V_\bC.$ Consequently, $\bH$ is finite over its
center, hence every irreducible $\bH$-module is finite dimensional,
and the center of $\bH$ acts by characters in the irreducible
modules. The central characters are parameterized by $W\backslash
V_\bC^*.$ If $\nu\in V_\bC^*,$ write $\chi_\nu$ for the central
character parameterized by $W\nu.$

In \cite{BCT}, the Casimir element $\Omega$ of $\bH$ was
introduced: if $\{\om_i\}$  is an orthonormal basis of $V$, set 
\begin{equation}
\Omega=\sum_i \om_i^2\in S(V_\bC)^W.
\end{equation}
If $(\pi,X)$ is an irreducible $\bH$-module with central character
$\chi_\nu$, then $\pi(\Omega)=(\nu,\nu).$

Let $*$ be the conjugate linear anti-involution of $\bH$ defined on
generators by \cite{BM1}:
\begin{equation}\label{e:star}
t_w^*=t_{w^{-1}},\quad \omega^*=-t_{w_0} w_0(\om) t_{w_0},\quad w\in
W, \ \om\in V,
\end{equation}
where $w_0$ is the longest Weyl group element. We say that an
$\bH$-module $(\pi,X)$ is $*$-hermitian if $X$ has a hermitian form
$\langle~,~\rangle$ which is $*$-invariant, i.e.,
\begin{equation}
\langle\pi(h)x,y\rangle=\langle x,\pi(h^*)y\rangle.
\end{equation}
If the form is positive definite, we say that $(\pi,X)$ is $*$-unitary.

\subsection{The Dirac operator}\label{sec:2.6} For every $\om\in V$,
define
\begin{equation}\label{e:omtilde}
\wti\om=\frac 12 (\om-\om^*)=\om-p_\om,\text{ where }p_\om=\frac 12\sum_{\al\in\Phi^+} k_\al
\frac{\om-s_\al(\om)}\al t_{s_\al}.
\end{equation}
By definition, $\wti\om^*=-\wti\om.$ In addition, these elements have
the known properties (e.g., \cite{BCT}):
\begin{equation}\label{e:propomtilde}
t_w\wti\om t_{w^{-1}}=\wti{w(\om)},\quad [\wti\om_1,\wti\om_2]=-[p_{\om_1},p_{\om_2}].
\end{equation}
Let $\{\om_i\}$ be an orthonormal basis of $V.$ The Dirac element
defined in \cite{BCT} is
\begin{equation}\label{e:dirac}
\C D=\sum_{i} \wti\om_i\otimes\om_i\in \bH\otimes C(V).
\end{equation}
It is independent of the choice of basis and its square is
\begin{equation}\label{e:squaredirac}
\C D^2=-\Omega\otimes 1+\frac 14\Delta_{\wti W}(\Omega_{\wti W,k}),
\end{equation}
where $\Delta_{\wti W}$ is the diagonal embedding of $\bC[\wti W]$
into $\bH\otimes C(V)$ extending $\Delta_{\wti W}(\wti w)=p(\wti
w)\otimes \wti w.$

If $(\pi,X)$ is an $\bH$-module, and $S$ is a spin $C(V)$-module, left
action by $\C D$ 
defines a Dirac operator
\begin{equation}\label{e:diracop}
D_X: X\otimes S\to X\otimes S.
\end{equation}
When $X$ is $*$-hermitian, $D_X$ is a self-adjoint operator with
respect to the tensor product form (the hermitian form on $S$ being
the natural one). Moreover, $D_X$ is $\sgn$ $\wti W$-invariant.

\begin{definition}\label{d:diracology}
The Dirac cohomology of $X$ (with respect to $S$) is $H^D(X)=\ker
D_X/\ker D_X\cap \im D_X,$ a $\wti W$-representation. 
\end{definition}

An analogue of the algebraic version of Vogan's conjecture from real
groups, proved in this setting in \cite{BCT}, says that there exists
an algebra homomorphism
\begin{equation}\label{e:zeta}
\zeta: Z(\bH)\to \bC[\wti W]^{\wti W},
\end{equation}
such that for every $z\in Z(\bH)$, there exists a unique $a\in
\bH\otimes C(V)$ with the property
\begin{equation}\label{e:algVogan}
z\otimes 1=\Delta_{\wti W}(\zeta(z))+\C D a+a\C D.
\end{equation}
For every $\wti\sigma\in \text{Irr}_{\text{gen}} \wti W,$ define a
central character
$\chi^{\wti\sigma}: Z(\bH)\to \bC$ by
$\chi^{\wti\sigma}(z)=\wti\sigma(\zeta(z)),$ $z\in Z(\bH).$ As
explained in \cite{BCT}, (\ref{e:algVogan}) implies the following
result.

\begin{theorem}[\cite{BCT}]\label{t:vogan} Let $X$ be an irreducible
  $\bH$-module , and
  suppose the irreducible $\wti W$-representation $\wti \sigma$ occurs
  in $H^D(X)$ (in particular $H^D(X)\neq 0$).  Then the central
  character of $X$ is $\chi^{\wti\sigma}.$
\end{theorem}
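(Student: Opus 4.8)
The plan is to deduce the statement directly from the Vogan-type identity (\ref{e:algVogan}), in the spirit of the analogous argument for real reductive groups. Fix $z\in Z(\bH)$ and let $a\in\bH\otimes C(V)$ be the (unique) element with $z\otimes 1=\Delta_{\wti W}(\zeta(z))+\C D a+a\C D$. Evaluate this identity as an identity of operators on $X\otimes S$: since $X$ is irreducible with central character $\chi_\nu$, the left-hand side acts by the scalar $\chi_\nu(z)$, while $\C D$ acts as the Dirac operator $D_X$ of (\ref{e:diracop}) and $a$ acts through its natural action on $X\otimes S$. Applying the resulting operator to a vector $x\in\ker D_X$ and using $D_X x=0$ to annihilate the term $a\C D$, one obtains
\begin{equation*}
\chi_\nu(z)\,x=\Delta_{\wti W}(\zeta(z))\,x+D_X(a x).
\end{equation*}

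The next step is to push this relation into $H^D(X)$. Because $\zeta(z)$ lies in the center $\bC[\wti W]^{\wti W}$ and $D_X$ is $\sgn$ $\wti W$-invariant, we have $\C D\,\Delta_{\wti W}(\wti w)=\sgn(\wti w)\,\Delta_{\wti W}(\wti w)\,\C D$ for every $\wti w\in\wti W$, so $\C D\,\Delta_{\wti W}(\zeta(z))\,x=0$ whenever $\C D x=0$; thus $\Delta_{\wti W}(\zeta(z))$ preserves $\ker D_X$ (and, by the same computation on the image, $\im D_X$), and it descends to the $\wti W$-module $H^D(X)$. Consequently $\Delta_{\wti W}(\zeta(z))\,x\in\ker D_X$, hence also $D_X(ax)=\chi_\nu(z)x-\Delta_{\wti W}(\zeta(z))x\in\ker D_X\cap\im D_X$, so its class in $H^D(X)$ is zero. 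Passing to $H^D(X)$ therefore gives $\chi_\nu(z)\,[x]=[\Delta_{\wti W}(\zeta(z))\,x]$ for every $x\in\ker D_X$; that is, the central element $\zeta(z)\in\bC[\wti W]^{\wti W}$ acts on the $\wti W$-representation $H^D(X)$ by the scalar $\chi_\nu(z)$.

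Finally, suppose the irreducible genuine $\wti W$-representation $\wti\sigma$ occurs in $H^D(X)$. On that constituent $\zeta(z)$ acts, by Schur's lemma, by the scalar $\wti\sigma(\zeta(z))=\chi^{\wti\sigma}(z)$; comparing with the previous step yields $\chi_\nu(z)=\chi^{\wti\sigma}(z)$. Since $z\in Z(\bH)$ was arbitrary, the central character of $X$ equals $\chi^{\wti\sigma}$. (If one prefers to make $[x]$ concrete, use that $\bC[\wti W]$ is semisimple to lift a copy of $\wti\sigma$ to a $\wti W$-subspace $U\subseteq\ker D_X$ with $U\cap\im D_X=0$ and run the Schur argument on $U$.) The only point requiring care is the bookkeeping with the $\sgn$ character in the second step, i.e.\ checking that the twist by $\sgn(\wti w)=\pm1$ is harmless precisely because each $\Delta_{\wti W}(\wti w)$ merely rescales $\ker D_X$ and $\im D_X$; once this is in place the argument is formal.
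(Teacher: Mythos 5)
Your argument is correct and is exactly the deduction from the identity (\ref{e:algVogan}) that the paper alludes to when it says ``as explained in \cite{BCT}, (\ref{e:algVogan}) implies the following result''; the paper itself gives no proof, deferring to \cite{BCT}, where this same evaluation on $\ker D_X$, passage to $H^D(X)$ via the $\sgn$-twisted $\wti W$-equivariance of $D_X$, and Schur's lemma applied to the central element $\zeta(z)$ constitute the standard proof. No gaps.
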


In light of this theorem, it is necessary to describe the central
characters $\chi^{\wti\sigma}$ more explicitly. When the parameter
function $k$ is identically $1$, it turns out that
\begin{equation}\label{e:identcc}
\chi^{\wti\sigma}=\frac 12 h, \text{ where $h$ is a middle element for
  $e$ when } \Psi(\wti\sigma)=G\cdot e.
\end{equation}
When $W$ is of type $B_n$, every irreducible $\wti W$-representation
is of the form $\wti\sigma=(\sigma\times 0)\otimes S,$ where $\sigma$
is a partition of $n$, $\sigma\times 0$ labels the irreducible
$W$-representation in the bipartition notation, and $S$ is a spin
$C(V)$-module. For such a $\wti\sigma$, the character
$\chi^{\wti\sigma}$ is described in \cite{C} by the following
combinatorial procedure. Suppose the parameter function $k$ takes the
value $k_s$ on the short simple root, and $k_\ell$ on the long simple
roots of type $B_n.$ In the left justified decreasing Young diagram of shape
$\sigma,$ label each box starting with $k_s$ in the upper left corner,
then increasing by $k_\ell$ to the right and decreasing by $k_\ell$
down. The entries of the resulting Young tableau, viewed in
$\bR^n\cong V$ form the central character $\chi^{\wti \sigma}.$ 

When $\bH$ is of type $G_2$ or $F_4$ with arbitrary parameters $k$,
the lists of central characters $\chi^{\wti\sigma}$ are given in \cite{COT}.

\section{One $W$-type modules}\label{sec:3}
Recall from the introduction that a simple $\bH$-module is called a
one $W$-type module if it is $*$-hermitian and its restriction to
$\bC[W]$ is irreducible. In \cite{BM2}, a classification of one $W$-type $\bH$-modules is obtained. We simplify
one of the arguments in \cite{BM2} and show that every one $W$-type
module has nonzero Dirac cohomology.

\subsection{Action of $\wti\om$} \label{sec:3.1}
The following proposition is implicitly proved in \cite{BM2}, under
an assumption about the restriction of the $W$-type to maximal
parabolic subgroups. We give here a different proof that works in
general and  can be regarded as an analogue of the well-known argument
which proves that the only irreducible finite dimensional unitary representation of
$SL(2,\bR)$ is trivial, e.g., \cite{Kn}.

\begin{proposition}\label{p:2.1}
Let $(\pi,X)$ be a one $W$-type
$\bH$-module. Then $\pi(\wti\om)=0$ for all $\om\in V.$ In particular,
an irreducible $W$-representation $\sigma$ can be extended to a one
$W$-type $\bH$ module if and only if
$[\sigma(p_{\om_1}),\sigma(p_{\om_2})]=0,$ for every $\om_1,\om_2\in V.$
\end{proposition}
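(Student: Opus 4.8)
The plan is to exploit the hermitian structure together with the self-adjointness property $\wti\om^* = -\wti\om$. Suppose $(\pi,X)$ is a one $W$-type module, so $X \cong \sigma$ as a $W$-representation for some irreducible $\sigma \in \widehat W$, and $X$ carries a $*$-invariant hermitian form $\langle~,~\rangle$ which, because $\sigma$ is irreducible and unitarizable, is (up to scalar) positive definite and $W$-invariant. First I would observe that since $\wti\om = \wti\om^t \cdot(-1)$, more precisely $\wti\om^* = -\wti\om$, the operator $\pi(\wti\om)$ is skew-adjoint with respect to $\langle~,~\rangle$; hence $i\,\pi(\wti\om)$ is self-adjoint and in particular $\pi(\wti\om)$ is diagonalizable with purely imaginary eigenvalues. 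The key point is then to show this forces $\pi(\wti\om)=0$.

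The main idea, in the spirit of the $SL(2,\bR)$ argument cited, is to play the $W$-structure against the commutation relation (3) in Definition \ref{d:graded}. Recall $\wti\om = \om - p_\om$ where $p_\om = \tfrac12\sum_{\al\in\Phi^+} k_\al \frac{\om-s_\al(\om)}{\al} t_{s_\al} \in \bC[W]$. Crucially, $p_\om$ acts on $X$ through the $W$-representation $\sigma$, so $\pi(p_\om) = \sigma(p_\om)$ is a fixed endomorphism determined entirely by the $W$-module structure, and in particular $\tr_X \pi(p_\om) = 0$ whenever $\om \neq 0$ — indeed each $t_{s_\al}$ has trace $\chi_\sigma(s_\al)$ independent of which reflection (within a $W$-orbit), and the alternating-sign combination over $\Phi^+$ of the $W$-equivariant vectors $\frac{\om - s_\al(\om)}{\al}\in V_\bC$ sums to zero, or more directly one checks $\sum_{\al} k_\al \frac{\om-s_\al(\om)}{\al}\,t_{s_\al}$ is (a multiple of) $\wti\om - \om$ and take traces against the fact that $\om$ acts with trace... — here I would instead argue that $\sum_{\al\in\Phi^+} k_\al\,\om(\al^\vee)\,t_{s_\al}$-type sums are $W$-conjugation covariant, so $\tr_X \pi(\wti\om) = \tr_X \pi(\om)$. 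Now using the relation $\om\, t_{s_\al} - t_{s_\al}\,s_\al(\om) = k_\al \frac{\om - s_\al(\om)}{\al}$ summed appropriately, one can show that the operator $\pi(\om)$ on $X$ is conjugate (via $\pi(t_w)$) to $\pi(w^{-1}\om)$ plus a correction lying in $\sigma(\bC[W])$; averaging over $W$ and using that $\sum_{w\in W} w\om = 0$ gives $\sum_{w\in W}\pi(t_w)\,\pi(\wti\om)\,\pi(t_{w^{-1}}) = 0$ by \eqref{e:propomtilde}, i.e. $\sum_w \pi(\wti{w(\om)}) = \pi(\wti{\sum_w w(\om)}) = \pi(\wti 0) = 0$. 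That last identity is immediate but not yet enough.

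The actual argument I expect to carry through: fix a nonzero $\om$ and consider the self-adjoint operator $A = i\,\pi(\wti\om)$ on the positive-definite space $X$. Decompose $X = \bigoplus_c X_c$ into eigenspaces of $A$, and let $c_{\max}$ be the largest eigenvalue, $v \in X_{c_{\max}}$ a unit vector. The strategy is to produce, from the Hecke relations, an element $h \in \bH$ with $h^* = h$ (or a pair $h, h^*$) acting as a "raising-type" operator so that $\langle \pi(h^*h) v, v\rangle \geq 0$ computes to something forcing $c_{\max} \le 0$; applying the same to $-\om$ (whose $\wti{(\cdot)}$ is $-\wti\om$, so $A$ is replaced by $-A$) forces $c_{\min} \ge 0$, hence $A = 0$. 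Concretely one uses the commutator identity $[\wti\om_1,\wti\om_2] = -[p_{\om_1},p_{\om_2}]$ from \eqref{e:propomtilde} and the fact that $\pi(p_{\om_j}) = \sigma(p_{\om_j})$ are bounded operators of known spectral size (eigenvalues governed by $k_\al$ and the $W$-representation), so that the $\wti\om_i$ generate a Lie-algebra-like structure on which positivity of the form is very restrictive — exactly as boundedness forces triviality for $SL(2,\bR)$.

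Once $\pi(\wti\om) = 0$ for all $\om$ is established, the "in particular" is formal: if $\sigma$ extends to a one $W$-type module then $0 = [\pi(\wti\om_1),\pi(\wti\om_2)] = -[\sigma(p_{\om_1}),\sigma(p_{\om_2})]$ by \eqref{e:propomtilde}. Conversely, if $[\sigma(p_{\om_1}),\sigma(p_{\om_2})] = 0$ for all $\om_1,\om_2$, one defines $\pi$ on $\bH$ by $\pi(t_w) = \sigma(w)$ and $\pi(\om) = \sigma(p_\om)$ (i.e. forcing $\pi(\wti\om)=0$); the relations (1), (2) of Definition \ref{d:graded} hold because $\sigma(p_\om)$ is linear in $\om$ and the $p_\om$ commute by hypothesis, and relation (3) holds because it is equivalent, after substituting $\om = p_\om$, to the identity $p_\om t_{s_\al} - t_{s_\al}\, p_{s_\al(\om)} = k_\al \frac{\om - s_\al(\om)}{\al}\, t_{s_\al}$, which is a purely $\bC[W]$-internal identity that one verifies directly from the definition of $p_\om$ in \eqref{e:omtilde} — this is a finite check using the reflection relations and is where a small amount of routine Weyl-group bookkeeping enters. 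The main obstacle is the first part, specifically pinning down the right "raising operator" $h$ and the positivity estimate that collapses the spectrum of $\pi(\wti\om)$; everything after $\pi(\wti\om)=0$ is essentially bookkeeping.
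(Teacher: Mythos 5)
Your first step (skew-adjointness of $\pi(\wti\om)$ with respect to the positive definite $*$-invariant form, hence purely imaginary eigenvalues) matches the paper, and your treatment of the ``in particular'' clause is essentially the same bookkeeping the paper leaves implicit. But the core of the proposition --- actually forcing $\pi(\wti\om)=0$ --- is not proved in your proposal: the trace computations in your second paragraph go nowhere (as you yourself note, $\sum_w\pi(\wti{w(\om)})=0$ is vacuous), and the ``raising operator'' strategy in your third paragraph is only a plan; you explicitly concede that you have not identified the element $h$ or the positivity estimate. I do not think that plan can be carried out as stated: unlike the $\fk{sl}(2)$ situation, the operators $\pi(\wti\om)$ do not generate an interesting Lie algebra on $X$ --- their mutual commutators $[\wti\om_1,\wti\om_2]=-[p_{\om_1},p_{\om_2}]$ land in $\sigma(\bC[W])$, which gives no ladder structure --- so there is no natural candidate for a raising operator whose positivity would pin the spectrum.

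The paper's actual mechanism, which is absent from your proposal, is to produce a \emph{second} positive definite invariant form on $X$, invariant for the other anti-involution $\bullet$ (with $t_w^\bullet=t_{w^{-1}}$, $\om^\bullet=\om$), for which $\wti\om^\bullet=\wti\om$ and hence $\pi(\wti\om)$ is \emph{self}-adjoint with real eigenvalues; combined with your purely imaginary eigenvalues and diagonalizability this gives $\pi(\wti\om)=0$ immediately. Constructing that form is the nontrivial input: one first observes that a one $W$-type module has real central character (otherwise it would be unitarily induced and could not restrict irreducibly to $W$), so by the classification of $*$-hermitian modules in [BM1] one has $(\pi,X)\cong(\pi^{\C A},X)$ where $\C A$ is the automorphism with $\C A(t_w)=t_{w_0ww_0}$, $\C A(\om)=-w_0(\om)$; the $\bullet$-form is then $\langle x,y\rangle^\bullet=\langle x,\pi(t_{w_0})\kappa(y)\rangle^*$ for an intertwiner $\kappa$, and it is definite because $X|_W$ is irreducible. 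Without this (or some substitute supplying the missing positivity), your argument stops at ``$\pi(\wti\om)$ is skew-adjoint,'' which is far from the conclusion.
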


\begin{proof} 
By definition, $(\pi,X)$ has a $*$-invariant hermitian form $\langle~,~\rangle_X^*$. Since $X$
is a one $W$-type module, this form can be normalized so that it is
positive definite. Since $\wti\om^*=-\wti\om$ for $\om\in V$, it follows that
$\pi(\wti\om)$ is a skew-symmetric operator on
$X,\langle~,~\rangle_X^*$. In particular, $\pi(\wti\om)$ is
diagonalizable and acts with
purely imaginary eigenvalues on $X.$

Consider now the conjugate linear anti-involution $\bullet$ of $\bH$
studied in \cite{BC}, defined on
generators by 
\begin{equation}
t_w^\bullet=t_{w^{-1}},\quad \om^\bullet=\om,\quad w\in W,\ \om\in V.
\end{equation}
From (\ref{e:omtilde}), it is immediate that $\wti\om^\bullet=\wti\om$
for every $\om\in V$. The two anti-involutions $*$ and $\bullet$ are
related via
\begin{equation}
h^\bullet=t_{w_0}\C A(h^*) t_{w_0},\quad h\in \bH,
\end{equation}
where $\C A$ is the automorphism of $\bH$ defined by $\C A(t_w)=t_{w_0
w w_0}$ and $\C A(\om)=-w_0(\om),$ $w\in W,$ $\om\in V.$ Let $(\pi^{\C
A},X)$ be the $\C A$-twist of the module $(\pi,X)$: $\pi^{\C A}(h)
x=\pi(\C A(h))x,$ $h\in\bH,$ $x\in X.$ 

An easy observation is that if $X$ is a one $W$-type with central
character $\chi_\nu$, then necessarily $\nu\in V$, i.e., $X$ has real
central character. (Because otherwise, $X$ would be unitarily induced
from a module with real central character of a parabolic subalgebra of
$\bH$ and then its restriction to $W$ would not be irreducible.)
Then by the classification of
$*$-hermitian irreducible modules \cite{BM1}, $(\pi,X)\cong (\pi^{\C
  A},X)$. Let $\kappa: X\to X $ be an intertwiner of the two actions.
Using
$*$-invariant form $\langle~,~\rangle_X^*$ on $X$, one defines a
$\bullet$-invariant form $\langle~,~\rangle_X^\bullet$ on $X$ by
\begin{equation}
\langle x,y\rangle^\bullet_X=\langle
x,\pi(t_{w_0})\kappa(y)\rangle_X^*, \quad x,y\in X.
\end{equation}
We verify that this form is indeed $\bullet$-invariant:
\begin{align*}
\langle
\pi(h)x,y\rangle_X^\bullet&=\langle\pi(h)x,\pi(t_{w_0})\kappa(y)\rangle_X^*=\langle
x,\pi(h^*)\pi(t_{w_0})\kappa(y)\rangle_X^*\\
&=\langle x,\pi(t_{w_0})\pi(\C
A(h^\bullet))\kappa(y)\rangle_X^*=\langle x,
\pi(t_{w_0})\kappa(\pi(h^\bullet)y)\rangle_X^*\\
&=\langle x, \pi(h^\bullet) y\rangle_X^\bullet.
\end{align*}
Since $X|_W$ is irreducible, the form $\langle~,~\rangle_X^\bullet$
may also be normalized so that it positive definite. Since
$\wti\om^\bullet=\wti\om,$ $\om\in V,$ it follows that every
$\pi(\wti\om)$ is symmetric, and thus it acts with real eigenvalues.

In conclusion, every $\pi(\wti\om)=0.$ The second claim is immediate
because the only relation between $\wti\om_1$ and $\wti\om_2$ is $[\wti\om_1,\wti\om_2]=-[p_{\om_1},p_{\om_2}].$

\end{proof}

\begin{remark}
Proposition \ref{p:2.1} implies that there is a one-to-one
correspondence between (hermitian) one $W$-type $\bH$-modules and
simple modules of the quotient algebra $\bC[W]/J,$ where $J$ is the
two-sided ideal generated by $\{[p_{\om_1},p_{\om_2}]: \om_1,\om_2\in
V\}.$ These simple modules were identified for each irreducible group $W$ in
\cite{BM2} and those results are used in the sequel. For example, for
$W=S_n$, they are parameterized by rectangular Young diagrams of shape
$d\times k$, $dk=n.$ It may be of independent interest to study the
ideal $J$ in more detail.
\end{remark}

\subsection{Tensoring with the reflection representation} Let $(\sigma, X)$ be an irreducible $W$-representation. One may also
ask if it is possible to extend $\sigma$ to a simple $\bH$-module, not
necessarily $*$-hermitian. Suppose there exists such a module
$(\pi,X).$ Then one can consider the vector subspace
\[Y=\{\pi(\wti\om)x:\ \om\in V_\bC,\ x\in X\}\subset X.\]
Using the commutation relation (\ref{e:propomtilde}), one sees
immediately that $Y$ is $W$-stable. If $Y=0$, we are in the same
setting as in Proposition \ref{p:2.1}, and thus $(\pi,X)$ is a
(hermitian) one $W$-module. 

Suppose now that $Y\neq 0.$ Since $(\sigma,X)$ is irreducible, it
follows that $Y=X.$ Moreover, there is a natural surjective map
\begin{equation}
\tau: X\otimes V_\bC\to Y,\quad \tau(x\otimes\omega)=\pi(\wti\om)x,
\end{equation}
which is $W$-equivariant with respect to the reflection representation
action on $V_\bC$:
\[
\pi(w)\tau(x\otimes\omega)=
\pi(w)\pi(\wti\om)x=\pi(\wti{w(\om)})\sigma(w)x=\tau(\sigma(w)x\otimes
{w(\om)})=\tau((\sigma\otimes\refl)(w)(x\otimes\omega)).
\]
By the assumptions, this map is nonzero, which implies the following lemma.

\begin{lemma}
A necessary condition for a $W$-type $\sigma$ to extend to a
non-hermitian $\bH$-module is that
$\Hom_W[\sigma\otimes\refl,\sigma]\neq 0.$
\end{lemma}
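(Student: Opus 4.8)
The plan is to run the dichotomy on the $W$-stable subspace
\[
Y=\{\pi(\wti\om)x:\ \om\in V_\bC,\ x\in X\}\subseteq X
\]
introduced just before the statement. Given a simple, \emph{non}-hermitian $\bH$-module $(\pi,X)$ with $X|_W\cong\sigma$ irreducible, the subspace $Y$ is $W$-stable by the commutation relation $t_w\wti\om t_{w^{-1}}=\wti{w(\om)}$ in (\ref{e:propomtilde}), so by irreducibility of $\sigma$ either $Y=0$ or $Y=X$.

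First I would eliminate the case $Y=0$. If $\pi(\wti\om)=0$ for all $\om\in V_\bC$, then applying $\pi$ to the identity $[\wti\om_1,\wti\om_2]=-[p_{\om_1},p_{\om_2}]$ of (\ref{e:propomtilde}) yields $[\sigma(p_{\om_1}),\sigma(p_{\om_2})]=0$ for all $\om_1,\om_2\in V_\bC$, since each $p_\om$ lies in $\bC[W]$. By the second assertion of Proposition \ref{p:2.1}, $\sigma$ then extends to a (hermitian) one $W$-type $\bH$-module; and on any such module, as well as on $(\pi,X)$ itself, the generator $\om$ is forced to act by $p_\om$ (because $\wti\om=\om-p_\om$ acts by $0$) while $t_w$ acts by $\sigma(w)$. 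Hence these $\bH$-module structures on $X$ coincide, so $(\pi,X)$ is $*$-hermitian, contradicting the hypothesis. Therefore $Y=X$.

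It remains to extract the Hom condition from $Y=X$. The map $\tau: X\otimes V_\bC\to X$, $\tau(x\otimes\om)=\pi(\wti\om)x$, is then surjective, and, by the one-line computation displayed before the statement (again using (\ref{e:propomtilde})), it intertwines the action of $\sigma\otimes\refl$ on the source with $\sigma$ on the target. A nonzero $W$-homomorphism $\sigma\otimes\refl\to\sigma$ thus exists, i.e.\ $\Hom_W[\sigma\otimes\refl,\sigma]\neq 0$, which is the assertion.

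The only genuinely non-formal point is the implication $Y=0\Rightarrow(\pi,X)$ hermitian: it combines both directions of Proposition \ref{p:2.1} with the uniqueness of the $\bH$-action on a one $W$-type module (namely $\om\mapsto p_\om$). The remainder is the bookkeeping already carried out above the statement, so I do not expect a substantial obstacle.
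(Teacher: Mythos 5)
Your proposal is correct and follows essentially the same route as the paper, whose proof is precisely the discussion preceding the lemma: the dichotomy on the $W$-stable subspace $Y$, with $Y=0$ reducing to the hermitian case of Proposition \ref{p:2.1} and $Y=X$ yielding the surjective $W$-map $\tau:\sigma\otimes\refl\to\sigma$. Your elaboration of the $Y=0$ case (that the action is forced to be $\om\mapsto p_\om$, hence coincides with the hermitian extension) is a correct filling-in of what the paper leaves implicit.
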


We now list the irreducible $W$-representations $\sigma$ with this
property. Recall that in type $A_{n-1}$, the $S_n$-types are parameterized
by partitions of $n$.

The irreducible $W(B_n)$-representations are parameterized by
bipartitions $(\lambda_L,\lambda_R)$. Denote the corresponding
representation by $\lambda_L\times \lambda_R$, see \cite{Ca}. Our
convention is that $n\times 0$ is the trivial representation, the
reflection representation is $(n-1)\times 1$, and
$0\times 1^n$ is the sign representation.

When
$\lambda_L\neq\lambda_R$, the restriction
$\lambda_L\times\lambda_R|_{W(D_n)}\cong
\lambda_R\times\lambda_L|_{W(D_n)}$ is irreducible. If
$\lambda_L=\lambda_R$, then the restriction splits into two
nonisomorphic irreducible $W(D_n)$-representations, $(\lambda_L\times\lambda_L)^\pm.$

For exceptional Weyl groups, we use the notation of \cite{Ca} for $W$-types.

\begin{proposition} Suppose the root system is irreducible.
\begin{enumerate}
\item If $w_0$ is central in $W$, then
  $\Hom_W[\sigma\otimes\refl,\sigma]=0$ for every $W$-type $\sigma.$
\item In type $A_{n-1}$, $\dim
  \Hom_W[\sigma_\lambda\otimes\refl,\sigma_\lambda]=d-1$, where $d$ is 
  the number of distinct parts in the partition $\lambda$. In particular,
  $\Hom_W[\sigma_\lambda\otimes\refl,\sigma_\lambda]=0$ if and only if
  $\lambda$ is a  partition of rectangular shape.
\item In type $D_n$, $n$ odd, $\dim
    \Hom_W[(\lambda_L\times\lambda_R)\otimes\refl,\lambda_L\times\lambda_R]=1$ if and only
    if $\lambda_R$ (viewed as a Young diagram) is obtained from
    $\lambda_L$ by removing one box; otherwise $
    \Hom_W[(\lambda_L\times\lambda_R)\otimes\refl,\lambda_L\times\lambda_R]=0$.
\item In type $E_6,$ $\dim\Hom_W[\sigma\otimes\refl,\sigma]=1$ if and
  only if $\sigma$ is one of the representations: \[(20,2), (20,20),
  (60,5), (60,8), (60,11), (64,4),(64,13),(81,6), (81,10), (90,8);\]
  otherwise $\Hom_W[\sigma\otimes\refl,\sigma]=0.$
\end{enumerate}
\end{proposition}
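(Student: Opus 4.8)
The plan is to reduce the statement to a character-theoretic computation and then carry it out case by case, using that $\dim\Hom_W[\sigma\otimes\refl,\sigma]$ equals the inner product $\langle \chi_\sigma\cdot\chi_\sigma, \chi_\refl\rangle_W = \langle |\chi_\sigma|^2,\chi_\refl\rangle_W$ of characters. Part (1) is the cleanest: if $w_0$ is central then $w_0$ acts by a scalar $\pm 1$ on any irreducible $\sigma$, hence by $+1$ on $\sigma\otimes\sigma^*\cong\End(\sigma)$, while $w_0$ acts on the reflection representation $\refl$ by $-1$ (since $w_0$ sends $\Phi^+$ to $-\Phi^+$, so $-w_0$ permutes the simple roots and $\refl(w_0)$ has determinant $(-1)^{\dim V}$, but more to the point $w_0=-1$ on $V$ exactly when $w_0$ is central, giving $\chi_\refl(w_0)=-\dim V$). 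Therefore $\refl$ cannot appear in $\End(\sigma)=\sigma\otimes\sigma^*$: any $W$-map $\sigma\otimes\refl\to\sigma$ would force $w_0$ to act compatibly, contradiction. Concretely, $\langle|\chi_\sigma|^2,\chi_\refl\rangle$ is a sum over conjugacy classes; splitting the central element $w_0$ out shows the $w_0$-contribution has the wrong sign relative to what a nonnegative multiplicity allows, and since $w_0\cdot(\text{any class})$ is again a class one gets a pairing forcing the total to be $0$. This handles $B_n$, $D_n$ with $n$ even, $E_7$, $E_8$, $F_4$, $G_2$ all at once.

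For part (2), type $A_{n-1}$, I would use the classical branching/Pieri-type description: $\sigma_\lambda\otimes\refl$, where $\refl=\sigma_{(n-1,1)}$, decomposes via the rule that tensoring with $(n-1,1)$ adds the ``add a box then remove a box (from a different position) or keep the same partition'' operations; more precisely $\sigma_\lambda\otimes\sigma_{(n-1,1)} = \bigoplus_\mu c_\mu \sigma_\mu$ where $\mu$ ranges over partitions obtained from $\lambda$ by removing a box from one removable corner and adding a box to one addable corner, and the multiplicity of $\sigma_\lambda$ itself is (number of removable corners of $\lambda$) $-1$ (one less because the ``remove then add back the same box'' is already counted in $\sigma_\lambda\otimes\triv$). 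Since the number of removable corners of $\lambda$ equals the number $d$ of distinct parts, we get $\dim\Hom_W[\sigma_\lambda\otimes\refl,\sigma_\lambda]=d-1$, which is $0$ iff $d=1$ iff $\lambda$ is a rectangle. I would spell this out by writing $\chi_{(n-1,1)}=\chi_{\mathrm{perm}}-\chi_\triv$ on the defining permutation representation $\bC^n$, so $\dim\Hom_W[\sigma_\lambda\otimes\refl,\sigma_\lambda] = \dim\Hom_W[\sigma_\lambda\otimes\bC^n,\sigma_\lambda]-1$, and then $\Hom_W[\sigma_\lambda\otimes\bC^n,\sigma_\lambda]=\Hom_{S_{n-1}}[\sigma_\lambda|_{S_{n-1}},\sigma_\lambda|_{S_{n-1}}]$ by Frobenius reciprocity applied to $\bC^n=\Ind_{S_{n-1}}^{S_n}\triv$; the latter, by the branching rule, has dimension equal to the number of removable corners of $\lambda$.

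For part (3), type $D_n$ with $n$ odd, I would restrict from $B_n$. Since $n$ is odd, $-1\notin W(D_n)$, and $\refl_{D_n}=(n-1)\times 1|_{W(D_n)}$ is still irreducible of dimension $n$. The key input is Clifford theory for the index-two inclusion $W(D_n)\subset W(B_n)$: for $\lambda_L\ne\lambda_R$ the module $\lambda_L\times\lambda_R$ restricts irreducibly and $(\lambda_L\times\lambda_R)|_{D_n}\cong(\lambda_R\times\lambda_L)|_{D_n}$, while $\refl|_{D_n}$ has the same restriction as $(n-1)\times 1$ and also as $\refl\otimes\sgn=1\times(n-1)$ only if those agree on $D_n$ — here I must be careful, but in any case $\Hom_{W(D_n)}[\sigma\otimes\refl,\sigma]$ for $\sigma=(\lambda_L\times\lambda_R)|_{D_n}$ can be computed as $\Hom_{W(B_n)}[(\lambda_L\times\lambda_R)\otimes\refl_{B_n}, \Ind_{D_n}^{B_n}\sigma] = \Hom_{W(B_n)}[(\lambda_L\times\lambda_R)\otimes\refl_{B_n}, (\lambda_L\times\lambda_R)\oplus(\lambda_R\times\lambda_L)]$. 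So I need the $B_n$ decomposition of $(\lambda_L\times\lambda_R)\otimes\refl_{B_n}$ and the multiplicities of $\lambda_L\times\lambda_R$ and $\lambda_R\times\lambda_L$ in it. Using $\refl_{B_n}=(n-1)\times 1$ and the known tensor rule in $W(B_n)$ (which combines the $A$-type Pieri rule on the ``$L$'' and ``$R$'' diagrams with a box-transfer between them coming from the sign twist), one finds: the multiplicity of $\lambda_L\times\lambda_R$ in $(\lambda_L\times\lambda_R)\otimes\refl_{B_n}$ is $(d_L+d_R)-1$ where $d_L,d_R$ count distinct parts, while the multiplicity of $\lambda_R\times\lambda_L$ is $1$ exactly when $\lambda_R$ and $\lambda_L$ differ by moving a single box — i.e. $\lambda_R$ is obtained from $\lambda_L$ by removing one box — and $0$ otherwise. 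Adding these, the total vanishes iff $d_L=d_R=1$ (i.e. both rectangles, but then $\lambda_L=\lambda_R$, excluded since $n$ odd forces $|\lambda_L|\ne|\lambda_R|$... so actually I should phrase it as: it vanishes in the $\lambda_L\ne\lambda_R$ case iff neither diagram is obtainable from the other by one box move; the stated criterion ``$\lambda_R$ obtained from $\lambda_L$ by removing one box'' is what makes the cross-term contribute) — I will reconcile this with the proposition's phrasing by noting $\lambda_L$ and $\lambda_R$ always have $|\lambda_L|+|\lambda_R|=n$ with one the result of the other by a box operation is the only way to land back in the set $\{\lambda_L\times\lambda_R,\lambda_R\times\lambda_L\}$ after tensoring with $\refl$.

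For part (4), $E_6$, this is a finite computation: $\chi_\refl$ and all $25$ irreducible characters of $W(E_6)$ are tabulated (e.g. in \cite{Ca}), so I would simply evaluate $\langle|\chi_\sigma|^2,\chi_\refl\rangle_W = \frac{1}{|W|}\sum_{w\in W}|\chi_\sigma(w)|^2\chi_\refl(w)$ over the $25$ conjugacy classes of $W(E_6)$ for each $\sigma$ and record the ten $\sigma$ for which it equals $1$. (Since $E_6$ is ``small,'' all multiplicities here turn out to be $0$ or $1$; I would note that $W(E_6)$-characters are real, so $|\chi_\sigma|^2=\chi_\sigma^2$.) I expect the main obstacle to be part (3): getting the $W(B_n)$ tensor-product decomposition with $\refl_{B_n}$ precisely right — in particular tracking the single box that can be transferred between the $L$-diagram and the $R$-diagram via the sign-twist component of the reflection representation, and then correctly interpreting which terms survive after inducing the $W(D_n)$-representation back up to $W(B_n)$. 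Parts (1), (2), (4) are routine once (3)'s bookkeeping framework is set up, and I would present (1) first, then (2) as a warm-up for the branching-rule technique, then (3), then (4) as a table.
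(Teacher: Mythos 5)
Parts (1), (2) and (4) of your proposal are sound. Your argument for (1) --- $w_0$ central forces $w_0=-1$ on $V$, hence $w_0$ acts by $-1$ on $\refl$ but by $(\pm 1)^2=+1$ on $\sigma\otimes\sigma^*$, so $\Hom_W[\refl,\sigma\otimes\sigma^*]=0$ --- is a clean, complete alternative to the paper's argument via the parity of lowest harmonic degrees (the two are essentially equivalent, since $\sigma(w_0)=(-1)^{b(\sigma)}$ when $w_0=-1$, but yours needs no input about fake degrees). Part (2) is exactly the paper's argument (the projection formula for $\bC^n=\Ind_{S_{n-1}}^{S_n}\triv$ plus the branching rule), and part (4) is the same finite character computation the paper does by machine.

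Part (3), however, contains a genuine error. You claim that the multiplicity of $\lambda_L\times\lambda_R$ in $(\lambda_L\times\lambda_R)\otimes\refl_{B_n}$ is $(d_L+d_R)-1$, obtained by ``combining the $A$-type Pieri rule on the $L$ and $R$ diagrams with a box-transfer.'' This is false: the reflection representation of $W(B_n)$ is $(n-1)\times 1$, and Clifford theory for $(\bZ/2\bZ)^n\subset W(B_n)$ shows that tensoring with it \emph{only} performs the box transfer --- every constituent of $(\lambda_L\times\lambda_R)\otimes\refl_{B_n}$ is of the form $\lambda'_L\times\lambda'_R$ with $(|\lambda'_L|,|\lambda'_R|)=(|\lambda_L|\mp 1,|\lambda_R|\pm 1)$. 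In particular the multiplicity of $\lambda_L\times\lambda_R$ itself is always $0$ (as it must be, by your own part (1) applied to $B_n$, where $w_0$ is central). The within-diagram Pieri moves you describe would arise from tensoring with $(n-1,1)\times 0$, which is not $\refl_{B_n}$. Your Frobenius-reciprocity setup $\Hom_{W(D_n)}[\sigma\otimes\refl,\sigma]=\Hom_{W(B_n)}[(\lambda_L\times\lambda_R)\otimes\refl_{B_n},(\lambda_L\times\lambda_R)\oplus(\lambda_R\times\lambda_L)]$ is correct, and once the tensor rule is corrected the computation collapses: the first summand contributes $0$, and the second contributes $1$ exactly when $(\lambda_R,\lambda_L)$ is reachable from $(\lambda_L,\lambda_R)$ by a single box move, i.e.\ when one of $\lambda_L,\lambda_R$ is obtained from the other by removing one box (the two cases coincide under the identification $(\lambda_L\times\lambda_R)|_{W(D_n)}\cong(\lambda_R\times\lambda_L)|_{W(D_n)}$). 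Your closing ``reconciliation'' sentence does not repair this: as written, your intermediate claim would give a nonzero answer for almost all bipartitions, contradicting the statement you are proving. You should replace the claimed decomposition by the box-transfer rule and delete the $(d_L+d_R)-1$ term.
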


\begin{proof}
(1) When $w_0$ is central, the irreducible representations that appear in
$\sigma\otimes\refl$ have lowest harmonic degree of opposite parity to
the lowest harmonic degree of $\sigma.$ For type $B_n$, we also have
the following known rule for tensoring with the reflection
representation: 
\begin{equation}\label{e:reflB}
(\lambda_L\times\lambda_R)\otimes (n-1)\times 1=\sum_{(\lambda'_L,\lambda'_R)}\lambda'_L\times\lambda'_R,
\end{equation}
where $(\lambda'_L,\lambda'_R)$ are all possible bipartitions obtained from
$(\lambda_L,\lambda_R)$ by removing a box from one partition (diagram)
and adding it to the other. 

(2) This is an easy application of the Littlewood-Richardson
rule. More precisely,
\begin{equation}
\sigma_\lambda\otimes\refl+\sigma_\lambda=\sigma_\lambda\otimes \Ind_{S_{n-1}}^{S_n}(\triv)=\Ind_{S_{n-1}}^{S_n}(\sigma_\lambda|_{S_{n-1}}).
\end{equation}

(3) This follows immediately from (\ref{e:reflB}) and the restriction
rule from $W(B_n)$ to $W(D_n)$.

(4) We verified the statement directly using GAP 3.4.4 and the package 'chevie'.
\end{proof}

\begin{remark}
In type $A_{n-1}$, it is well-known that every $S_n$-type can be
lifted to a simple $\bH$-module. This is a consequence of the
existence of a surjective $\bC$-algebra homomorphism $\bH\to
\bC[S_n]$, see for example \cite[Lemma 3.2.1]{BC2}.
\end{remark}

\subsection{Dirac cohomology}\label{sec:3.2}

\begin{proposition}\label{p:3.2}
Let $(\pi,X)$ be a one $W$-type $\bH$-module. Then $H^D(X)=X\otimes S.$
\end{proposition}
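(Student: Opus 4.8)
The plan is to use the formula for the square of the Dirac operator, equation (\ref{e:squaredirac}), together with Proposition \ref{p:2.1}. Recall that the Dirac element is $\C D=\sum_i \wti\om_i\otimes\om_i$. By Proposition \ref{p:2.1}, a one $W$-type module $(\pi,X)$ satisfies $\pi(\wti\om_i)=0$ for every orthonormal basis vector $\om_i$. Therefore the Dirac operator $D_X: X\otimes S\to X\otimes S$, which acts by $\sum_i \pi(\wti\om_i)\otimes \om_i$, is literally the zero operator.

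Once $D_X=0$, the Dirac cohomology is immediate from Definition \ref{d:diracology}: we have $\ker D_X=X\otimes S$ and $\im D_X=0$, so $H^D(X)=\ker D_X/(\ker D_X\cap\im D_X)=(X\otimes S)/0=X\otimes S$. This also shows in particular that $H^D(X)\neq 0$, so Theorem \ref{t:vogan} applies and the central character of $X$ is determined by any genuine $\wti W$-type occurring in $X\otimes S$; but that consequence is exploited later in the paper rather than needed here.

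I should double-check the consistency with (\ref{e:squaredirac}): since $D_X=0$ we must have $\pi\otimes S$ of both $-\Omega\otimes 1$ and $\frac14\Delta_{\wti W}(\Omega_{\wti W,k})$ agreeing. Indeed, $\pi(\Omega)=(\nu,\nu)$ where $\chi_\nu$ is the central character, and on the other side $\Delta_{\wti W}(\Omega_{\wti W,k})$ acts on $X\otimes S$ through the $\wti W$-action; the vanishing of $\C D^2$ on $X\otimes S$ is then equivalent to the statement that this $\wti W$-action has $\Omega_{\wti W,k}$ acting by the scalar $4(\nu,\nu)$, which is a genuine constraint consistent with the results of section \ref{sec:2.4}. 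But none of this is needed for the proof of the proposition: the one-line argument via $\pi(\wti\om)=0$ suffices. There is essentially no obstacle here — the entire content has been front-loaded into Proposition \ref{p:2.1}, and the present statement is a formal corollary. The only point requiring care is simply citing $\pi(\wti\om)=0$ correctly and unwinding the definition of $H^D$.
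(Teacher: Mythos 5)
Your proof is correct and follows essentially the same route as the paper: Proposition \ref{p:2.1} forces $D_X=\sum_i\pi(\wti\om_i)\otimes\om_i$ to vanish identically, whence $H^D(X)=X\otimes S$ directly from Definition \ref{d:diracology}. The only cosmetic difference is that the paper first invokes unitarity to note $\ker D_X\cap\im D_X=0$, whereas you bypass this since $\im D_X=0$ outright; the extra consistency check with (\ref{e:squaredirac}) is harmless but, as you say, unnecessary.
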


\begin{proof}
Since $X$ is a unitary module, $\ker D\cap\im D=0,$ and $H^D(X)=\ker
D_X.$ But by Proposition \ref{p:2.1} and the definition of $D$, $D$ is
identically zero on $X\otimes S.$
\end{proof}

\begin{corollary}\label{c:3.2}
Let $X$ be a one $W$-type module and suppose $X|_W=\sigma$. If
$\wti\sigma$ is an irreducible $\wti W$-representation that occurs in
$\sigma\otimes S,$ for some spin $C(V)$-module $S$, then the central
character of $X$ is $\chi^{\wti\sigma}.$
\end{corollary}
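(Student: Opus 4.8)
The plan is to combine Proposition~\ref{p:3.2} with Theorem~\ref{t:vogan}. First I would observe that by Proposition~\ref{p:3.2}, the Dirac cohomology of $X$ is $H^D(X)=X\otimes S$, so in particular $H^D(X)\neq 0$ and every irreducible $\wti W$-constituent of $X\otimes S$ occurs in $H^D(X)$. Since $X|_W=\sigma$, the $\wti W$-module $X\otimes S$ is (the restriction to $\wti W$ of) $\sigma\otimes S$, and $\wti\sigma$ is assumed to be a constituent of this. Hence $\wti\sigma$ occurs in $H^D(X)$.

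Second, I would apply Theorem~\ref{t:vogan} to the irreducible $\bH$-module $X$: since $\wti\sigma$ occurs in $H^D(X)$, the central character of $X$ equals $\chi^{\wti\sigma}$. That is the entire argument; the corollary is essentially an immediate consequence of the two propositions and the Vogan-type theorem already established in the excerpt.

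The only point requiring a small amount of care is the identification of the $\wti W$-action on $X\otimes S$ with $\sigma\otimes S$ as $\wti W$-representations. Here I would recall from section~\ref{sec:2.6} that $\wti W$ acts on $X\otimes S$ through the diagonal embedding $\Delta_{\wti W}$, with $\wti w$ acting as $p(\wti w)\otimes\wti w$; since $\pi$ factors through $\bC[W]$ on the relevant elements (indeed $X|_W=\sigma$), the first tensor factor is precisely $\sigma$ viewed as a $\wti W$-module via $p:\wti W\to W$, so $X\otimes S\cong\sigma\otimes S$ as $\wti W$-modules, and the hypothesis that $\wti\sigma$ occurs in $\sigma\otimes S$ is exactly the statement that it occurs in $X\otimes S=H^D(X)$.

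I do not anticipate any real obstacle: this is a formal corollary, with no genuine calculation. The substantive content — that the Dirac operator vanishes on one $W$-type modules, and that Dirac cohomology detects the central character — has already been done in Propositions~\ref{p:2.1}, \ref{p:3.2} and Theorem~\ref{t:vogan}. The corollary merely packages these facts, and serves as the bridge to the explicit case-by-case determination of central characters carried out in the following subsections via the tensor product decompositions of $\sigma\otimes S$.
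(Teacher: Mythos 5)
Your argument is correct and is exactly the paper's proof: Corollary \ref{c:3.2} follows immediately by combining Proposition \ref{p:3.2} with Theorem \ref{t:vogan}. The extra remark identifying the $\wti W$-action on $X\otimes S$ with $\sigma\otimes S$ via $\Delta_{\wti W}$ is a correct (if routine) clarification that the paper leaves implicit.
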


\begin{proof}
This is immediate from Proposition \ref{p:3.2} and Theorem \ref{t:vogan}.
\end{proof}

We use Corollary \ref{c:3.2} to determine the central characters for
all one $W$-type modules. We refer to the case by case classification
in \cite{BM2}.

\subsection{Exceptional root systems}\label{sec:3.3}
For the exceptional root systems, we use the computer algebra system
GAP/chevie together with the character tables for $\wti W$-representations
\cite{Mo} to
decompose the tensor products $\sigma\otimes S$, where $\sigma$ is a
$W$-type which can be extended to a one $W$-type representation (the
explicit list is in \cite{BM2}), and $S$ a spin module. The results
are summarized in the tables below. The notation for
$W$-representations is as in \cite{Ca}, while the notation for
$\wti W$-representations is an in \cite{Mo}.

When the root system admits unequal parameters, we write $k_s$ and
$k_\ell$ for the value of the parameters of the short and long roots,
respectively. In the tables for $F_4$ and $G_2$, $\om_i$, $I=1,4$ and
$i=1,2$, respectively, denote the fundamental weights in $V$.

\begin{table}[h]\label{t:E6}
\caption{$E_6$}
\begin{tabular}{|c|c|c|}
\hline
$W$-type $\sigma$ &$\sigma\otimes S$ &$G\backslash\C N_{\text{sol}}$\\

\hline

$(1,0), (1,36)$ &$8_s$ &$E_6$\\
\hline
$(10,9)$ &$40_{ss}+20_s+20_{ss}$ &$D_4(a_1)$\\
\hline
$(15,4), (15,16)$ &$120_s$ &$E_6(a_3)$\\
\hline
$(24,6), (24,12)$ &$120_s+72_s$ &$E_6(a_3)$\\
\hline

\end{tabular}
\end{table}

\begin{table}[h]\label{t:E7}
\caption{$E_7$}
\begin{tabular}{|c|c|c|}
\hline
$W$-type $\sigma$ &$\sigma\otimes S$ &$G\backslash\C N_{\text{sol}}$\\

\hline

$(1,0), (1,63)$ &$8_s$ &$E_7$\\
\hline

$(15,7), (15,28)$ &$120_s$ &$E_7(a_4)$\\
\hline

$(21,3), (21,36)$ &$168_s$ &$E_7(a_2)$\\
\hline

$(35,4),(35,31)$ &$280_s$ &$E_7(a_3)$\\
\hline

$(70,9), (70,18)$ &$560_s$ &$E_7(a_5)$\\
\hline

$(84,12), (84,15))$ &$560_s+112_{ss}$ &$E_7(a_5)$\\
\hline

$(105,6), (105,15), (105,12), (105,21)$ &$120_s+720_s$ &$E_7(a_4)$\\

\hline

$(210,10), (210,21)$ &$112_{ss}+2\cdot 560_s+448_s$ &$E_7(a_5)$\\

\hline
\end{tabular}
\end{table}

\begin{table}[h]\label{t:E8}
\caption{$E_8$}
\begin{tabular}{|c|c|c|}
\hline
$W$-type $\sigma$ &$\sigma\otimes S$ &$G\backslash\C N_{\text{sol}}$\\

\hline

$(1,0),(1,120)$ &$16_s$ &$E_8$\\

\hline

$(50,8),(50,56)$ &$800_s$ &$E_8(b_4)$\\
\hline

$(84,4),(84,64)$ &$1344_{ss}$ &$E_8(a_3)$\\
\hline

$(175,12), (175,36)$ &$2800_{ss}$ &$E_8(b_6)$\\
\hline

$(525,12),(525,36)$ &$5600_{sss}+2800_s$ &$E_8(a_5)$\\
\hline

$(700,16),(700,28)$ &$8400_s+2800_{ss}$ &$E_8(b_6)$\\
\hline

$(972,12),(972,32)$ &$9072_s+6480_s$ &$E_8(a_6)$\\
\hline

$(168,24)$ &$1120_s+224_s+1344_s$ &$E_8(a_7)$\\
\hline

$(420,20)$ &$1120_s+2016_{ss}+2016_{sss}+224_s+1344_s$ &$E_8(a_7)$\\

\hline

$(448,9)$ &$7168_s$ &$E_8(b_5)$\\

\hline
\end{tabular}
\end{table}

\begin{table}[h]\label{t:F4}
\caption{$F_4$}
\begin{tabular}{|c|c|c|c|}
\hline
$W$-type $\sigma$ &$\sigma\otimes S$ &$G\backslash\C N_{\text{sol}}$
&$\chi^{\wti\sigma},k\not\equiv 1$\\

\hline

$(1,0), (1,24)$ &$4_s$ &$F_4$ &$k_\ell\om_1+k_\ell\om_2+k_s\om_3+k_s\om_4$\\
\hline
$(1,12)', (1,12)''$ &$4_{ss}$ &$F_4(a_3)$ &$k_\ell\om_1+k_\ell\om_2+(-2k_\ell+k_s)\om_3+k_s\om_4$\\
\hline
$(2,4)'', (2,16)'$ &$8_{sss}$ &$F_4(a_1)$ &$k_\ell\om_1+k_\ell\om_2+(-k_\ell+k_s)\om_3+k_s\om_4$\\
\hline
$(2,4)', (2,16)''$ &$8_{ssss}$ &$F_4(a_2)$ &$k_\ell\om_1+k_\ell\om_2+(-2k_\ell+k_s)\om_3+(3k_\ell-k_s)\om_4$\\
\hline
$(4,8)$ &$8_s+8_{ss}$ &$F_4(a_3)$ &$k_\ell\om_2+(-k_\ell+k_s)\om_4$\\
\hline
$(4,7), (4,7)'$ &$12_{ss}+4_{ss}$ &$F_4(a_3)$&no\\
\hline
$(6,6)'$ &$12_{ss}+8_s+4_{ss}$ &$F_4(a_3)$&no\\
\hline
$(8,3)', (8,9)''$ &$8_{ssss}+24_s$ &$F_4(a_2)$&no\\
\hline
\end{tabular}

\end{table}

\begin{table}[h]\label{t:G2}
\caption{$G_2$}
\begin{tabular}{|c|c|c|c|}
\hline
$W$-type $\sigma$ &$\sigma\otimes S$ &$G\backslash\C N_{\text{sol}}$
&$\chi^{\wti\sigma},k\not\equiv 1$\\
\hline
$(1,0),(1,6)$ &$2_s$ &$G_2$ &$k_\ell\om_1+k_s\om_2$\\
\hline
$(1,3)',(1,3)''$ &$2_{ss}$ &$G_2(a_1)$&$k_\ell\om_1+(-k_\ell+k_s)\om_2$\\
\hline
$(2,2)$ &$2_{ss}+2_{sss}$ &$G_2(a_1)$ &no\\
\hline

\end{tabular}

\end{table}

\subsection{Type $A_{n-1}$} For $A_{n-1}$, the only $S_n$-types that can be
extended to a ($*$-hermitian) one $W$-type module correspond to
partitions of rectangular shape \cite{BM2}. Let $\sigma_{d\times k}$ be the
irreducible $S_n$-representation parameterized by the rectangular
partition $(\underbrace{d,d,\dots,d}_k),$ where $dk=n.$ 

One says that a partition $\lambda$ of $n$ is strict if all the parts
of $\lambda$ are distinct. One says that $\lambda$ is even or odd if
$n-|\lambda|$ is even or odd, respectively; here $|\lambda|$ denotes the
number of parts in $\lambda.$ The classification of irreducible $\wti S_n$-representations goes back
to Schur. To every strict partition $\lambda$ of $n$, one constructs
one irreducible $\wti S_n$-representation $\wti\sigma_\lambda$, when
$\lambda$ is even, and two irreducible $\wti S_n$-representations
$\wti\sigma_\lambda^\pm$ when
$\lambda$ is odd. Moreover,
$\wti\sigma_\lambda^+\cong\wti\sigma_\lambda^-\otimes \sgn.$ These
representations are pairwise nonisomorphic and exhaust the dual of
$\wti S_n.$

As explained in \cite[Lemma 3.6.2]{BC2}, the tensor product rules from
\cite{St} imply that
\begin{equation}
\sigma_{d\times k}\otimes S^\ep=\begin{cases}2^{(k-1)/2}
  (\wti\sigma^+_{\text{hook}(d\times k)}+
  \wti\sigma^+_{\text{hook}(d\times k)}),&\text{ $k$ odd, $d$ even}\\2^{[k/2]}
  \wti\sigma^\ep_{\text{hook}(d\times k)},&\text{ otherwise.}
\end{cases}
\end{equation}
Here $\text{hook}(d\times k)=(d+k-1,d+k-3,\dots,|d-k|+1),$ and the
symbol $\ep$ stands for $+$ or $-$ if there exist two associate
representations of $\wti S_n$ or ``empty'' if there is only one.

Finally, the central character of the one $W$-type representation
supported on $\sigma_{d\times k}$ is one half the middle element of
the nilpotent orbit given in Jordan form by the strict partition $\text{hook}(d\times k).$

\subsection{Classical types}\label{sec:3.5}
We treat the case of the graded Hecke algebra of type $B_n$ with
parameters $k_\ell$ (on the long roots) and $k_s$ (on the short roots). There is an obvious isomorphism with the graded
Hecke algebra of type $C_n$ by changing the parameter function
appropriately. Also, as it is well-known, the graded Hecke algebra of type $B_n$ with
$k_s=0$ is isomorphic with the graded Hecke algebra of type $D_n$
extended by $\bZ/2\bZ.$

Set $\delta=2k_s/k_\ell$. By \cite[Proposition 3.24, Theorem
3.28]{BM2}, the $W(B_n)$-representations that can be extended to a one
$W$-type module are:
\begin{enumerate}
\item[(T1)] $\lambda_L=0$ or $\lambda_R=0$ (arbitrary parameters $k$);
\item[(T2)] $\lambda_L=d_1\times m_1$ and $\lambda_R=d_2\times m_2$ when $m_1-d_1=m_2-d_2+\delta.$
\end{enumerate}

The classification of irreducible $\wti{W(B_n)}$-representations was
obtained in \cite{Re}: for every partition $\lambda$ of $n$, the
representations
\begin{equation}
(\lambda\times 0)\otimes S,\text{ when $n$ is even}, \quad (\lambda\times 0)\otimes S^\pm,\text{ when $n$ is odd},
\end{equation}
are irreducible, pairwise nonisomorphic, and exhaust the dual of
$\wti{W(B_n)}$.

For one $W$-type modules of type (T1), i.e., the form $\lambda_L\times 0$ or
$0\times\lambda_R$, it is thus clear that the Dirac cohomology equals
\begin{equation}
(\lambda_L\times 0)\otimes S^\ep, \quad (\lambda_R^t\times 0)\otimes S^{-\ep},
\end{equation}
respectively. In particular, the central character of the one $W$-type
module is obtained by the combinatorial rule from \cite{C} explained at the end of
section \ref{sec:2.6}.

Thus, it remains to treat the case of one $W$-type modules of type
(T2). To determine the central character, it is sufficient, in light
of Corollary \ref{c:3.2}, to find one partition $\lambda$ on $n$ such
that
\begin{equation*}
\Hom_{\wti W}[(\lambda\times 0)\otimes
S,(\lambda_L\times\lambda_R)\otimes S]\neq 0, \text{ equivalently,} \Hom_{W}[(\lambda\times 0)\otimes
(S\otimes S^*),(\lambda_L\times\lambda_R)]\neq 0.
\end{equation*}
Since $S\otimes S\cong \wedge V_\bC$, when $n$ is even, $S^+\otimes
S^+\cong \wedge^{\text{even}} V_\bC$ and $S^+\otimes S^-\cong
\wedge^{\text{odd}} V_\bC$, when $n$ is odd, it is sufficient to find
$\lambda$ and $s$ such that
\begin{equation}
\Hom_W[(\lambda\times 0)\otimes \wedge^s V_\bC,
\lambda_L\times\lambda_R]\neq 0.
\end{equation}
Let $s$ equal the sum of parts of the partition $\lambda_R.$ Using Clifford theory for the semidirect product $W(B_n)=S_n\ltimes
(\bZ/2\bZ)^n$, we see that
\begin{equation}\label{e:cliffordB}
\lambda_L\times\lambda_R=\Ind_{S_{n-s}\times
  S_s\times (\bZ/2\bZ)^{n-s}\times (\bZ/2\bZ)^s}^{S_n}(\lambda_L\boxtimes\lambda_R\boxtimes \triv^{n-s}\boxtimes \sgn^s).
\end{equation}
On the other hand,
in bipartition notation, $\wedge^s V_\bC=(n-s)\times (1^s)$, and
\begin{equation}\label{e:tensB}
\begin{aligned}
&(\lambda\times 0)\otimes ((n-s)\times (1^s))=\\
&\Ind_{S_{n-s}\times
  S_s\times (\bZ/2\bZ)^{n-s}\times (\bZ/2\bZ)^s}^{S_n}((\lambda|_{S_{n-s}\times S_s}\otimes (\triv_{S_{n-s}}\boxtimes
\sgn_{S_s}))\boxtimes \triv^{n-s}\boxtimes \sgn^s).
\end{aligned}
\end{equation}
Comparing (\ref{e:cliffordB}) and (\ref{e:tensB}), we obtain:
\begin{lemma}
\begin{equation}\label{e:decomp}
(\lambda_L\times\lambda_R)\otimes S^\ep=\sum_{\lambda}
c_{\lambda_L,\lambda_R^t}^\lambda (\lambda\times 0)\otimes S^{\ep'},
\end{equation}
where
$c_{\lambda_L,\lambda_R^t}^\lambda=\dim\Hom_{S_n}[\lambda,\Ind_{S_{n-s}\times
S_s}^{S_n}(\lambda_L\boxtimes\lambda_R^t)]$ is the
Littlewood-Richardson coefficient, $s$ is the size of
$\lambda_R$. Here $\ep$ and $\ep'$ are ``empty''  if $n$ is
even, and when $n$ is odd, $\ep'=\ep$ is $s$ is odd and $\ep'=-\ep$ if
$s$ is even.
\end{lemma}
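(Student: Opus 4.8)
The goal is to prove the decomposition formula (\ref{e:decomp}) for $(\lambda_L\times\lambda_R)\otimes S^\ep$ as a genuine $\wti{W(B_n)}$-representation. The strategy is to reduce the computation of the multiplicity of each irreducible $(\lambda\times 0)\otimes S^{\ep'}$ in this tensor product to an ordinary $W(B_n)$-multiplicity problem, and then to an $S_n$-multiplicity problem via Clifford theory.

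First I would record the elementary fact that $\Hom_{\wti W}[(\lambda\times 0)\otimes S,(\lambda_L\times\lambda_R)\otimes S]\cong \Hom_{\wti W}[(\lambda\times 0),(\lambda_L\times\lambda_R)\otimes S\otimes S^*]$, and that $S\otimes S^*$ (equivalently $S\otimes S$ up to the involution $t$, since $S^*\cong S$ as a $\wti W$-module up to sign character) is isomorphic to $\wedge^\bullet V_\bC$ as a $W$-representation — with the even/odd splitting in the odd-dimensional case recorded in the paragraph preceding the lemma. This turns the genuine multiplicity into $\dim\Hom_W[(\lambda\times 0)\otimes \wedge^s V_\bC,\lambda_L\times\lambda_R]$ for the appropriate $s$, with $\ep'$ determined by the parity bookkeeping of whether we land in $\wedge^{\text{even}}$ or $\wedge^{\text{odd}}$; this is exactly the reduction already carried out in the text just before the lemma statement, so I would cite it directly.

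Next I would fix $s$ to be the size of $\lambda_R$ and compute both sides of the desired identity using Clifford theory for $W(B_n)=S_n\ltimes(\bZ/2\bZ)^n$. The key inputs are the two induction formulas (\ref{e:cliffordB}) and (\ref{e:tensB}) already displayed: the first expresses $\lambda_L\times\lambda_R$ as an induced representation from $S_{n-s}\times S_s\times(\bZ/2\bZ)^{n-s}\times(\bZ/2\bZ)^s$, and the second does the same for $(\lambda\times 0)\otimes((n-s)\times(1^s))$. Taking $\Hom_W$ between them and applying Frobenius reciprocity, the $(\bZ/2\bZ)$-factors match identically (both contribute $\triv^{n-s}\boxtimes\sgn^s$), so the multiplicity collapses to $\dim\Hom_{S_{n-s}\times S_s}[\lambda_L\boxtimes\lambda_R, (\lambda\otimes(\triv\boxtimes\sgn))|_{S_{n-s}\times S_s}]$; untwisting the sign on the $S_s$-factor (which replaces $\lambda_R$ by its transpose $\lambda_R^t$) and applying Frobenius reciprocity once more identifies this with the Littlewood–Richardson coefficient $c_{\lambda_L,\lambda_R^t}^\lambda=\dim\Hom_{S_n}[\lambda,\Ind_{S_{n-s}\times S_s}^{S_n}(\lambda_L\boxtimes\lambda_R^t)]$. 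Summing over all $\lambda$ and using that the $(\lambda\times 0)\otimes S^{\ep'}$ exhaust the genuine dual of $\wti{W(B_n)}$ (by Read's classification, quoted above) gives (\ref{e:decomp}).

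The only genuinely delicate point is the parity bookkeeping for $\ep'$ when $n$ is odd: one must track carefully whether tensoring with $\wedge^s V_\bC$ interchanges the two spin modules $S^+\leftrightarrow S^-$, which happens precisely according to the parity of $s$ together with the parity of $\dim V=n$. I expect this sign chase — reconciling "$S^+\otimes S^-\cong\wedge^{\text{odd}}V_\bC$" with the statement that the relevant $\wedge^s$ sits in the even or odd part — to be the main obstacle, but it is a finite and mechanical verification rather than a conceptual one; everything else is a direct application of Clifford theory and Frobenius reciprocity to the two formulas already on the page.
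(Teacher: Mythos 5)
Your proposal is correct and follows essentially the same route as the paper: the reduction of the genuine $\wti W$-multiplicity to $\Hom_W[(\lambda\times 0)\otimes\wedge^s V_\bC,\lambda_L\times\lambda_R]$ via $S\otimes S^*\cong\wedge V_\bC$ is exactly the paragraph preceding the lemma, and the paper's proof is precisely the comparison of (\ref{e:cliffordB}) and (\ref{e:tensB}) via Frobenius reciprocity, with the $\sgn$-twist on the $S_s$-factor producing $\lambda_R^t$. You merely spell out the reciprocity steps and the $\ep'$ parity bookkeeping that the paper leaves implicit.
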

Every partition $\lambda$ that appears in the right hand side of
(\ref{e:decomp}) determines the central character of the one $W$-type
representation.

 Since in our case (T2), $\lambda_L$ and $\lambda_R$ are
rectangular partitions, the Littlewood-Richardson coefficients can be
explicitly described, see \cite{Ok}. Recall that $\lambda_L=d_1\times
m_1$ and $\lambda_R=d_2\times m_2$. Then $\lambda_R^t=m_2\times
d_2.$ Without loss of generality
suppose that $m_1\ge d_2.$ Then a 
partition $\lambda$ occurs in (\ref{e:decomp}) if and only if $\lambda$ has length at most
$m_1+d_2$, and if
$\lambda=(\lambda_1\ge\lambda_2\ge\dots\ge\lambda_{m_1+d_2}\ge 0)$,
then
\begin{enumerate}
\item[(i)] $\lambda_j+\lambda_{m_1+d_2-j+1}=d_1+m_2,$ for all
  $j=1,d_2$;
\item[(ii)] $\lambda_{d_2}\ge \max(d_1,m_2)$;
\item[(iii)] $\lambda_{d_2+1}=\lambda_{d_2+2}=\dots=\lambda_{m_1}=d_1$;
\end{enumerate}
moreover, for all these $\lambda$, the coefficient
$c_{\lambda_L,\lambda_R^t}^\lambda=1.$ For example, if $\lambda_v$ and
$\lambda_h$ are obtained by ``gluing'' the two rectangular shapes
$\lambda_L$ and $\lambda_R^t$ vertically and  horizontally,
respectively, then $\lambda_v$ and $\lambda_h$ both appear in
(\ref{e:decomp}). Consequently, any one determines the central
character by the procedure outlined at the end of section
\ref{sec:2.6}. It is easy to check that they give the same central
character if and only if the condition $m_1-d_1=m_2-d_2+\delta$ from (T2) is
satisfied. In fact, by  Corollary \ref{c:3.2}, we know that every
$\lambda$ satisfying the rules (i)-(iii) above must yield the same
central character!


\ifx\undefined\bysame
\newcommand{\bysame}{\leavevmode\hbox to3em{\hrulefill}\,}
\fi

\end{document}